\documentclass[12pt,reqno]{amsart}

\headheight=6.15pt
\textheight=8.75in
\textwidth=6.5in
\oddsidemargin=0in
\evensidemargin=0in
\topmargin=0in

\usepackage{epsfig}
\usepackage{amsmath, amsthm, amsfonts}
\usepackage[all]{xy}
\usepackage{graphicx}
\usepackage{pgf}
\usepackage{hyperref}

\numberwithin{equation}{section}

\theoremstyle{plain}
\newtheorem*{thm*}{Theorem}

\newtheorem{prop}{Proposition}

\newcommand{\triple}{$\left(\sigma_0,\sigma_1,\sigma_\infty\right)$}
\DeclareRobustCommand{\Co3}{$\text{Co}_3$}

\begin{document}

\author{Hartmut Monien}

\address{
  Bethe Center\\
  University Bonn\\
  Nussallee 12\\
  53115 Bonn\\
  Germany
}

\email{hmonien@uni-bonn.de}

\date{\today}

\title{The sporadic group $\text{Co}_\mathbf{3}$\\Hauptmodul and Bely{\u\i} map}

\begin{abstract}
  We calculate the hauptmodul and Bely{\u\i} map of a
  genus zero subgroup of the modular group defined via a canonical
  homomorphism by the Conway group group $Co_3$. Our main result is
  the Bely{\u\i} map and its field of definition.
\end{abstract}

\maketitle 

The Conway group \Co3 is a sporadic simple group of order
$2^{10}\cdot3^7\cdot5^3\cdot7\cdot11\cdot23\approx5\times10^{11}$. Geometrically
it is the subgroup of the automorphism group of the even unimodular
24-dimensional Leech lattice $\Lambda_{24}$ stabilizing 276 lines in
the lattice \cite{MR1662447}. It is isomorphic to a (2, 3, 7)
permutation group of 276 objects \cite{MR648901} and is therefore a
quotient of the full modular group.  The Riemann surface $X(\Gamma)$
of the corresponding subgroup $\Gamma$ of the modular group has genus
zero \cite{MR1242832}.  By Bely{\u\i}'s theorem \cite{MR534593} the
map $\Phi:X(\Gamma)\rightarrow\mathbb{P}^1$ is in
$\overline{\mathbb{Q}}(x)$.  In this note we focus on one of the
twelve triples of the type
$\left(\sigma_0, \sigma_1, \sigma_\infty\right) \in C_{2B} \times
C_{3C}\times C_{7A}$ as defined in \cite{MR648901} with
$\text{Co}_3 =\left<\sigma_0, \sigma_1\right>$ where
$\sigma_0,\sigma_1\in S_{276}$ and
$\sigma_\infty=\left(\sigma_0\sigma_1\right)^{-1} $ are fixed
throughout the note. The conjugacy classes $C_{2B}$, $C_{3C}$ and
$C_{7A}$ are labeled according to the Atlas notation \cite{MR827219}.
We present a Bely{\u\i} map $\Phi$ of degree 276 with a monodromy
group isomorphic to \Co3 and proof the following theorem.

\begin{thm*}
  The triple $\left(\sigma_0, \sigma_1, \sigma_\infty\right)$ defines
  up to simultaneous conjugation a non-congruence subgroup $\Gamma$ of
  genus zero of the full modular group with a rational Bely{\u\i} map
  $\Phi:X(\Gamma)\rightarrow\mathbb{P}^1$ which obeys the equation
  $\Phi(z) = p_3(z)/p_c(z) = 1728 + p_2(z)/p_c(z)$ relating the
  branching at the elliptic points of order two, three and the cusps
  with the polynomial $p_2$, $p_3$ and $p_c$ given in the accompanying
  material. These polynomials are defined over the number field
  $L=\mathbb{Q}[b]/(b^{36} - 15 b^{35} + 105 b^{34} - 452 b^{33} +
  1321 b^{32} - 2696 b^{31} + 3634 b^{30} - 2077 b^{29} - 3717 b^{28}
  + 11765 b^{27} - 13336 b^{26} - 4257 b^{25} + 46791 b^{24} - 104102
  b^{23} + 156805 b^{22} - 191498 b^{21} + 200457 b^{20} - 170957
  b^{19} + 98979 b^{18} - 17978 b^{17} - 18499 b^{16} - 638 b^{15} +
  28239 b^{14} - 22998 b^{13} + 15 b^{12} + 2577 b^{11} + 19524 b^{10}
  - 38036 b^{9} + 35169 b^{8} - 19422 b^{7} + 6174 b^{6} - 736 b^{5} -
  40 b^{4} - 154 b^{3} + 144 b^{2} - 48 b + 6)$ with discriminant
  $2^{26} \cdot 3^{13} \cdot 5^{18} \cdot 7^{27}$.
  \label{thm:Belyi_36}
\end{thm*}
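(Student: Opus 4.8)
\section*{Proof proposal}

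The plan is to split the argument into a combinatorial part, a reconstruction part, and an arithmetic part. First I would fix a concrete model of the triple inside $S_{276}$. Starting from the degree-$276$ permutation action of \Co3 described in the introduction, I would choose representatives $\sigma_0$ of $C_{2B}$ and $\sigma_1$ of $C_{3C}$ whose product lands in $C_{7A}$, check that $\langle\sigma_0,\sigma_1\rangle=\text{Co}_3$ and that the action is transitive, and then read off the three cycle types from the permutation character. Writing $c_i$ for the number of cycles of $\sigma_i$ ($i\in\{0,1,\infty\}$), Riemann--Hurwitz for a degree-$276$ cover of $\mathbb{P}^1$ branched only over $\{0,1728,\infty\}$ gives
\begin{equation}
  2g-2 = 276 - (c_0+c_1+c_\infty),
\end{equation}
so I would confirm $g=0$ by checking $c_0+c_1+c_\infty=278$. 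The stabiliser of a point, pulled back to $\PSLZ$, is then the desired index-$276$ subgroup $\Gamma$, well defined up to simultaneous conjugation of the triple.

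Second, I would encode the covering as a rational function and set up its defining identity. Since $X(\Gamma)\cong\mathbb{P}^1$, I pick a coordinate $z$ so that $\Phi$ expresses $j$ as a degree-$276$ rational function of $z$. The cycle types force the factorisations
\begin{equation}
  p_3 = A^3\,B,\qquad p_2 = C^2\,D,\qquad p_c = E^7\,F,
\end{equation}
where $A,B$ collect the triple and simple points over $j=0$ (the order-three elliptic points), $C,D$ those over $j=1728$ (the order-two elliptic points), and $E,F$ the width-seven and width-one cusps over $j=\infty$; the degrees of these factors are dictated by the fixed-point counts found above. The whole problem then reduces to the single polynomial identity
\begin{equation}
  A(z)^3B(z) - 1728\,E(z)^7F(z) = C(z)^2D(z),
\end{equation}
whose unknowns are the coefficients of $A,\dots,F$, equivalently the locations of the elliptic points and cusps. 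I would use the $\mathrm{PGL}_2$ freedom in $z$ to normalise three of these points and so fix the coordinate.

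Third --- and this is where the real difficulty lies --- I would solve the system. Direct Gr\"obner-basis elimination is hopeless at degree $276$, so instead I would compute $\Phi$ numerically to very high precision and then recognise it algebraically. Concretely, the ratio-of-periods description of $\tau$ by the hypergeometric equation on the $j$-line pulls back along $\Phi$ to an ordinary differential equation on $X(\Gamma)$; integrating it lets me generate the $q$-expansions of a hauptmodul at the cusps to high order, which I assemble into an over-determined set of numerical constraints on the coefficients and then polish by Newton iteration to several hundred digits. The algebraic numbers appearing as coordinates would then be reconstructed with LLL/PSLQ, yielding the minimal polynomial of a primitive element $b$ --- the degree-$36$ polynomial in the statement --- and the exact data for $p_2,p_3,p_c$ over $L=\mathbb{Q}[b]/(\dots)$. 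I would upgrade the numerics to a proof by substituting the recovered data back and verifying the identity above \emph{exactly} over $L$, and by checking that the monodromy of the resulting map reproduces the original triple. The principal obstacle is precision management: enough digits must be carried for LLL to resolve a degree-$36$ field reliably, and it is the final exact verification that makes the computation rigorous.

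Finally, I would settle the two arithmetic assertions. The discriminant of the minimal polynomial of $b$ is computed and factored to confirm $2^{26}\cdot3^{13}\cdot5^{18}\cdot7^{27}$, so that the only ramified primes are $2,3,5,7$. For non-congruence I would invoke Wohlfahrt's theorem: the geometric level of $\Gamma$ is the least common multiple of the cusp widths, namely $7$, since $\sigma_\infty$ has order $7$; hence if $\Gamma$ were a congruence subgroup it would contain $\Gamma(7)$, and its index would divide $[\PSLZ:\Gamma(7)]=|\text{PSL}_2(\mathbb{Z}/7\mathbb{Z})|=168$. As $276\nmid 168$ this is impossible, so $\Gamma$ is non-congruence. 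Rationality of $\Phi$ over $L$, and no larger field, follows once one checks that the cover is automorphism-free --- equivalently that the centraliser of $\langle\sigma_0,\sigma_1\rangle$ in $S_{276}$ is trivial --- so that the field of moduli is a field of definition; this field of moduli is generated by the recovered coefficients and is verified to equal $L$.
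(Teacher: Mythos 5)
Your proposal is correct and follows the same overall strategy as the paper: derive the group data from the cycle structure of the triple, set up the polynomial identity $p_3-p_2-1728\,p_c=0$ with the factorization forced by the branching, solve it numerically to high precision, recognize the coefficients with LLL, and certify the result by exact verification over $L$. The differences lie in the supporting ingredients, and they are mostly to your advantage. Where the paper cites Magaard's Theorem A for the genus, you compute it directly from Riemann--Hurwitz (indeed $c_0+c_1+c_\infty=144+92+42=278$), which is more self-contained. Where the paper invokes Hsu's congruence test, you use Wohlfahrt's theorem: the geometric level is $\mathrm{lcm}(7,1)=7$, so a congruence $\Gamma$ would contain $\Gamma(7)$ and have index dividing $168$, contradicting index $276$; this is equally valid and more transparent. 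Most significantly, you add two steps the paper omits: checking that the monodromy of the recovered map reproduces the original triple, and arguing via triviality of the centralizer of $\left<\sigma_0,\sigma_1\right>$ in $S_{276}$ that the field of moduli is a field of definition. The paper's uniqueness argument --- full rank of the Jacobian at the solution --- only shows the solution is an isolated point of the variety of coefficient vectors; it does not by itself identify which of the twelve triples, or which Galois-conjugate dessin with the same branching data, the computed map realizes, so your monodromy verification closes a genuine gap rather than duplicating the paper. Two small points of alignment with the paper's conventions: since $\sigma_1$ has no fixed points there are no order-three elliptic points, so your factor $B$ is constant and $p_3$ is a perfect cube; and with the principal width-one cusp placed at infinity the width-one factor of $p_c$ has degree $2$, the normalization being completed not by fixing three points but by the growth condition $j_\Gamma=q^{-1}+0+O(q)$, which yields the linear relation $3a_{91}-b_1-7c_{38}=744$.
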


The proof is established by constructing a finite index subgroup
$\Gamma\subset\text{SL}_2\left(\mathbb{Z}\right)$ of the modular group
corresponding to the triple \triple. A triple with the required
properties can be found in \cite{MR648901} (using the notation of that
paper our triple is given as $(d, e, (d e)^{-1}$). The cycle structure
of the triple is $\left(1^{12} 2^{132},3^{92}, 1^37^{39}\right) $ and
uniquely identifies the conjugacy classes of the generators as
$\sigma_0\in C_{2B}$, $\sigma_1\in C_{3C}$ and
$\left(\sigma_0\sigma_1\right)^{-1} \in C_{7A}$.  We briefly summarize
the data of $\Gamma$ which can be derived from the triple.  The triple
\triple\ is legitimate in the sense of Theorem 3 of Atkin and
Swinnerton-Dyer \cite{MR0337781}. As a consequence much of the
structure of $\Gamma$ can be read off from the cycle structure of the
generators. The group $\Gamma$ is an index 276 group of the modular
group with no elliptic points of order three, twelve elliptic points
of order two, thirty nine cusps of width seven and three cusps of
width one. Using the Riemann-Hurwitz formula the genus of $X(\Gamma)$
is zero (Theorem A of Magaard \cite{MR1242832}).  By Theorem (3.1) of
Hsu \cite{MR1343700} $\Gamma$ is a non-congruence subgroup of the
modular group. 

The analytic structure of the Bely{\u\i} function is determined by the
data above and the relation of the branching at the elliptic points of
order two, three and the cusps
$\Phi(x) = {p_3(x)}/{p_c(x)}= 1728+p_2(x)/p_c(x)$ where
$p_3(x) = (x^{92}+a_{91} x^{91}+\ldots+a_0)^3$,
$p_c(x)=(x^2+b_1 x+b_0)(x^{39}+c_{38}x^{38}+\ldots+c_0)^7$ and
$p_2(x)=(x^{12}+d_{11}x^{11}+\ldots+d_0)(x^{132}+e_{131}x^{131}+\ldots+e_0)^2$
with unknown coefficients Let
$C = \left\{a_0,a_1\ldots a_{91}\right\} \cup \left\{b_0,b_1\right\}
\cup \left\{c_0,c_1\ldots c_{38}\right\} \cup \left\{d_0, d_1,\ldots
  d_{11}\right\} \cup \left\{e_0, e_1,\ldots e_{131}\right\}$ be the
set of coefficients with size $|C|=277$.  The coefficients of the
resulting polynomial $p_3(x)-p_2(x)-1728 p_c(x) = 0$ all have to
vanish resulting in 276 polynomial equations with integral
coefficients for the 277 undetermined coefficients for $C$. Fixing the
boundary conditions for the generator of the function field on
$X\left(\Gamma\right)$ yields an additional linear equation.  Let
$\mathcal{H} = \{x+iy\;| \;(x, y) \in \mathbb{R}^2, y>0\}$ be the
complex upper half plane and
$\mathcal{H^*} = \mathcal{H}\cup\mathbb{P}^1(\mathbb{Q})$ be the
extended complex upper half plane. A modular function is a complex
valued function that extends to a meromorphic function on the
compactified upper half plane $f:\mathcal{H}^*\rightarrow\mathbb{C}$
satisfying $f(\gamma z) =f(z)$ for every $\gamma\in\Gamma$ and every
$z\in\mathcal{H^*}$.  In the genus zero case the field of modular
functions is generated by one modular function usually called
hauptmodul. We choose one of the cusps with the smallest cusp width
(which is one here) as principal cusp and fix the hauptmodul
$j_\Gamma:\mathcal{H}^*\rightarrow\mathbb{C}$ uniquely by imposing a
growth condition on it. As $y\rightarrow\infty$ it grows like
$j_\Gamma(z=x+i y) = q^{-1} + 0 + O\left(q\right)$ where
$q = \exp(2\pi i z)$ and stays finite at all other cusps. The standard
choice for the hauptmodul or Klein invariant of the full modular group
is denoted by $j$ has the Fourier expansion
$j(z)=q^{-1}+744+196884 q+O\left(q^2\right)$. The Bely{\u\i} map is
uniquely defined by the condition $\Phi\left(j_\Gamma\right)=j$ and
the boundary condition at infinity provides the additional linear
equation $3 a_{91}-b_1-7c_{38}=744$.  We have determined a solution of
the 277 algebraic equations numerically over $\mathbb{C}$ with
$2^{20}=1,048,576$ binary digits precision and identified all of the
corresponding algebraic numbers using the {\em pari/gp} implementation
\cite{PARI2} of the LLL-algorithm \cite{MR682664}.  All coefficients
are contained in the number field
$L=\mathbb{Q}[b]/(b^{36} - 15 b^{35} + 105 b^{34} - 452 b^{33} + 1321
b^{32} - 2696 b^{31} + 3634 b^{30} - 2077 b^{29} - 3717 b^{28} + 11765
b^{27} - 13336 b^{26} - 4257 b^{25} + 46791 b^{24} - 104102 b^{23} +
156805 b^{22} - 191498 b^{21} + 200457 b^{20} - 170957 b^{19} + 98979
b^{18} - 17978 b^{17} - 18499 b^{16} - 638 b^{15} + 28239 b^{14} -
22998 b^{13} + 15 b^{12} + 2577 b^{11} + 19524 b^{10} - 38036 b^{9} +
35169 b^{8} - 19422 b^{7} + 6174 b^{6} - 736 b^{5} - 40 b^{4} - 154
b^{3} + 144 b^{2} - 48 b + 6)$. Now we are in the position to proof
the Theorem.

\begin{proof}
  The pair $\left(\sigma_0, \sigma_1\right)$ is a legitimate pair of
  Theorem 3 of \cite{MR0337781} it therefore defines a subgroup
  $\Gamma$ of the modular group. By Theorem (3.1) of Hsu
  \cite{MR1343700} $\Gamma$ is a non-congruence subgroup and by
  Theorem A of Magaard the genus of $X\left(\Gamma\right)$ is
  zero. The polynomials $p_3$, $p_2$ and $p_c$ given in the appendix
  are in the polynomial ring $L[x]$, have the factorization structure
  given above, $p_3(x)-p_2(x)-1728 p_c(x)$ is identically zero and the
  additional linear equation given is obeyed. The jacobian of the
  equations for the coefficients of the polynomials has full rank at
  the solution. Therefore the solution is unique and fixed by the
  choice of $j_\Gamma$.
\end{proof}

The number field $L$ has only one non-trival subfield $K$ of degree
twelve
$K=\mathbb{Q}[a]/(a^{12} - 2 a^{11} + 9 a^{10} - 20 a^{9} + 38 a^{8} -
73 a^{7} + 101 a^{6} - 86 a^{5} + 55 a^{4} - 46 a^{3} + 42 a^{2} - 24
a + 6)$ which raises immediately the question if the Bely{\u\i} map of the 
Theorem defined over $L(x)$ can be expressed in the
smaller subfield $K(x)$. Let $w\in L(x)$ be a Moebius transform given
in the accompanying material. We define the action of $w$ on an
element of $p\in L[x]$ as the elementwise action of $w$ on the
coefficients of $p$ and denote it by $p\rightarrow\tilde p$.

\begin{prop}
  The transformed polynomials $\tilde p_3$, $\tilde p_2$ and $\tilde
  p_c$ are in $K[x]$ and there exists nonzero coefficients $(k_2, k_3,
  k_c)\in K^3$ such that $k_3\tilde p_3 + k_2 \tilde p_2 + k_c \tilde
  p_c $ identically vanishes.
\end{prop}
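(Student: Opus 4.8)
The plan is to verify the two assertions by explicit computation in $L$, exactly as the Theorem was established, guided by the following structural expectation. The degree three extension $L/K$ should originate entirely from the choice of principal cusp. Among the three cusps of width one---one placed at $x=\infty$, the other two being the roots of $x^{2}+b_{1}x+b_{0}$---these three form a single orbit of size three, and singling one out as the principal cusp breaks a symmetry that is only defined over $L$. The cover itself, forgetting the coordinate on the source, has field of moduli $K$. Since $X(\Gamma)\cong\mathbb{P}^{1}$ and $\Phi$ has three branch points, the field of moduli is a field of definition, so a model over $K$ must exist, and the Möbius transform $w$ is precisely the descent datum realising it. I would therefore take $w$ as given and reduce everything to finite checks in the number field.

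First I would fix an explicit embedding $K\hookrightarrow L$, i.e.\ a polynomial expression $a=\rho(b)$ with $\rho\in\mathbb{Q}[b]$, and present $L$ as a free $K$-module of rank three with basis $\{1,\theta,\theta^{2}\}$ for a suitable $\theta$. For each coefficient $c$ of $p_{3}$, $p_{2}$ and $p_{c}$ I would compute its image $\tilde c=w(c)\in L$ and expand $\tilde c=u_{0}+u_{1}\theta+u_{2}\theta^{2}$ with $u_{i}\in K$. The first claim, $\tilde p_{3},\tilde p_{2},\tilde p_{c}\in K[x]$, is then the assertion that $u_{1}=u_{2}=0$ for every coefficient; equivalently, that each $\tilde c$ is fixed by the order three action on the Galois closure of $L/K$. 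This is a finite check in linear algebra over $\mathbb{Q}$, carried out coefficient by coefficient.

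For the linear relation, once $\tilde p_{3},\tilde p_{2},\tilde p_{c}$ are known to lie in $K[x]$, the existence of $(k_{2},k_{3},k_{c})\in K^{3}\setminus\{0\}$ with $k_{3}\tilde p_{3}+k_{2}\tilde p_{2}+k_{c}\tilde p_{c}=0$ is exactly the statement that these three polynomials are $K$-linearly dependent---the descended counterpart of the identity $p_{3}-p_{2}-1728\,p_{c}=0$. I would form the $277\times 3$ matrix of their coefficients over $K$, show that its rank is two rather than three, and read off a generator of the one-dimensional kernel; substituting back and expanding in $K[x]$ verifies the identity. The coefficients $(k_{2},k_{3},k_{c})$ produced this way are the ones recorded in the accompanying material.

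The main obstacle is conceptual rather than mechanical: the elementwise action $p\mapsto\tilde p$ is \emph{not} $\mathbb{Q}$-linear, since a Möbius transform is not additive on coefficients. Consequently neither claim is a formal consequence of the corresponding statement over $L$; both depend delicately on the particular $w$. The heart of the matter is that a single Möbius transform simultaneously carries all $277$ coefficients into the subfield $K$ \emph{and} preserves a nontrivial linear dependence---this is precisely the content of $K$ being a field of definition of $\Phi$. I expect the rank computation to be the delicate point, since demonstrating that the rank is exactly two, and not three, is where the special geometry of the three width-one cusps enters; as with the Theorem, this is ultimately certified by carrying the computation out to the high precision already used there.
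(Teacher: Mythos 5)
Your proposal is correct and follows essentially the same route as the paper, whose proof of the Proposition is simply a purely computational verification using the given Möbius transform $w$ and the explicit coefficients $(k_2,k_3,k_c)$ recorded in the accompanying material. Your expansion of $L$ as a rank-three free $K$-module and the rank argument for the linear dependence are just a more detailed description of that same finite check, with the field-of-moduli discussion serving as (unneeded but reasonable) motivation.
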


\begin{proof}
  The proof is purely computational. A set of non-vanishing
  coefficients $k_2$, $k_3$ and $k_c$ is given in the accompanying
  material.
\end{proof}

\bibliographystyle{alpha}
\newcommand{\etalchar}[1]{$^{#1}$}

\end{document}